\theoremstyle{definition}
\newtheorem{thm}{Theorem}[section]
\newtheorem{cor}[thm]{Corollary}
\newtheorem{lem}[thm]{Lemma}
\newtheorem{defn}[thm]{Definition}
\newtheorem{rmk}[thm]{Remark}
\newcommand{\rk}{\text{rk}}
\subjclass[2010]{06A07(primary)}
\begin{document}

\title{Symmetric Chain Decompositions of Products of Posets with Long Chains}
\author{Stefan David}
\address{University of Cambridge, Trinity College, Trinity Street, CB21TQ}
\email{sd637@cam.ac.uk}
\thanks{The first author would like to thank Trinity College, Cambridge, for providing travel funding.}

\author{Hunter Spink}
\address{Harvard University, Cambridge, 1 Oxford Street, 02138}
\email{hspink@math.harvard.edu}
\thanks{The second author would like to thank Harvard University for providing travel funding.}

\author{Marius Tiba}
\address{University of Cambridge, Trinity Hall, Trinity Lane, CB21TJ}
\email{mt576@cam.ac.uk}
\thanks{The third author would like to thank Trinity Hall, Cambridge, for support through the Trinity Hall Research Studentship.}

\begin{abstract}
We ask if there exists a symmetric chain decomposition of the cuboid $Q_k \times n$ such that no chain is \textit{taut}, i.e. no chain has a subchain of the form $(a_1,\ldots, a_k,0)\prec \ldots\prec (a_1,\ldots,a_k,n-1)$. In this paper, we show this is true precisely when $k \ge 5$ and $n\ge 3$. This question arises naturally when considering products of symmetric chain decompositions which induce orthogonal chain decompositions --- the existence of the decompositions provided in this paper unexpectedly resolves the most difficult case of previous work by the second author on almost orthogonal symmetric chain decompositions \cite{orth}, making progress on a conjecture of Shearer and Kleitman \cite{SK}. In general, we show that for a finite graded poset $P$, there exists a canonical bijection between symmetric chain decompositions of $P \times m$ and $P \times n$ for $m, n\ge \rk(P) + 1$, that preserves the existence of taut chains. If $P$ has a unique maximal and minimal element, then we also produce a canonical $(\rk(P) +1)$ to $1$ surjection from symmetric chain decompositions of $P \times (\rk(P) + 1)$ to symmetric chain decompositions of $P \times \rk(P)$ which sends decompositions with taut chains to decompositions with taut chains.
\end{abstract}

\maketitle

\section{Introduction}
The consideration of symmetric chain decompositions of posets first started with Kleitman's proof of the Littlewood-Offord theorem on concentration of sums of Bernoulli random variables \cite{Kleitman}. One of the key observations in that paper is that we can inductively create symmetric chain decompositions of the hypercube $Q_n=\{0,1\}^n$ (which can alternatively be viewed as the power set of $\{1,2,\ldots, n\}$), through a certain ``duplication method''. This observation is the special case with $Q$ a two-element chain poset of a more general claim that given two posets $P,Q$ with symmetric chain decompositions, we can decompose the product $P\times Q$ into symmetric chains by decomposing the rectangle posets formed by the product of a chain in $P$ with a chain in $Q$.  The literature is abundant with both necessary and sufficient conditions for the existence of symmetric chain decompositions on finite graded posets (see e.g. \cite{Griggs} and \cite{Stanley2}). However, the further study of commonalities between all symmetric chain decompositions is somewhat lacking, mostly due to the largely unstructured nature of a generic such decomposition of a typical poset.

The first attempt to study multiple symmetric chain decompositions simultaneously on a given poset occurred perhaps in 1979 when Shearer and Kleitman \cite{SK} found the minimum probability that two randomly chosen elements of $Q_n$ are comparable for an arbitrary probability distribution. In their proof, they introduced the notion of ``orthogonal chain decompositions'' of $Q_n$, which are decompositions of $Q_n$ into $n \choose {\lfloor n/2\rfloor}$ chains for which any chain in one decomposition intersects any other chain in the other decomposition in at most one element. Their construction proceeds by slightly modifying two ``almost orthogonal symmetric chain decompositions'' of $Q_n$, isolated in \cite{orth}, which are symmetric chain decompositions that satisfy the orthogonal intersection condition except for the maximal chain in the decompositions, which must intersect in precisely their top and bottom elements. In \cite{SK}, Shearer and Kleitman further conjectured that there exist $\lfloor n/2 \rfloor +1$ orthogonal decompositions of $Q_n$; no progress has been made on the conjecture until \cite{orth} and the present paper. In \cite{orth}, the second author has shown that three orthogonal decompositions can be constructed for all sufficiently high dimensional hypercubes, and they additionally arise from three almost orthogonal symmetric chain decompositions.

The strategy pursued in \cite{orth} is as follows. Suppose that for $1 \le j \le l$, we have almost orthogonal symmetric chain decompositions $\mathcal{F}^j_i$ of $Q_{n_i}$ for $i=1,2,\ldots, r$. Then to create $l$ almost orthogonal symmetric chain decompositions of $Q_{n_1+\ldots + n_r}$, we aim to give symmetric chain decompositions of the cuboids in $\prod_i \mathcal{F}^j_i$ in such a way that the chains inside cuboids in $\prod_i \mathcal{F}^j_i$ and chains inside cuboids in $\prod_i \mathcal{F}^{j'}_i$ intersect in at most one element when $j \ne j'$ (except of course for the two maximal chains, which we require to intersect in just their top and bottom elements).

To put the questions addressed in the present paper in the proper context, we consider the most difficult case from \cite{orth}. For this, it suffices to take $l=2$. Suppose $r=k+1$ and take the product of a $2$-element chain from each $\mathcal{F}^1_i$ for $1 \le i \le k$ with the maximal chain in $\mathcal{F}^1_{k+1}$. Similarly, take the analogous product with the $\mathcal{F}^2_i$. Let $n=n_{k+1}+1$ be the size of a maximal chain in the last hypercube. Using $\textbf{s}$ to refer to the $s$-element chain poset $0 \prec 1 \prec \ldots \prec s-1$, we have two cuboids of the form $P(k,n) = \underbrace{\textbf{2} \times \textbf{2} \times \ldots \times \textbf{2}}_k \times \textbf{n}= Q_k \times \mathbf{n}$, with the property that their intersection is either empty, or is $\{x\} \times \{\min_{Q_{n_{k+1}}},\max_{Q_{n_{k+1}}}\}$, where $x$ is some element of $Q_{n_1}\times \ldots \times Q_{n_k}$. To avoid the situation of having two chains intersect in at least two elements, it suffices to decompose $P(k,n)$ such that no subchain of a chain has the form $(a_1,\ldots, a_k,0)\prec \ldots\prec (a_1,\ldots,a_k,n-1)$. In $P(k,n)$, we call a symmetric chain containing such a subchain \textit{taut}. More generally, given a finite graded poset $P$, we say a symmetric chain in $P \times \textbf{n}$ is $\textit{taut}$ if it contains for some $p \in P$ a subchain of the form $(p, 0) \prec (p, 1) \prec \ldots \prec (p, n-1)$.

From this, the most natural question that arises is whether there is a symmetric chain decomposition of $P(k,n)$ without a taut chain. One of the main results of this paper,  Theorem~3.1, completely answers this question.

Surprisingly, the answer is more interesting than one might initially suspect. Indeed, for the family of posets $P(k,n)$ with $k \le 4$, i.e. for the posets $\textbf{2} \times \textbf{n}$, $\textbf{2} \times \textbf{2} \times \textbf{n}$, $\textbf{2} \times\textbf{2} \times \textbf{2} \times \textbf{n}$, and $\textbf{2} \times\textbf{2} \times\textbf{2} \times \textbf{2} \times \textbf{n}$, every symmetric chain decomposition has a taut chain. For $k\ge 5$ and $n \ge 3$ however, we will explicitly construct in Section 5 decompositions with no taut chains by boot-strapping decompositions of $P(5,3)$, $P(5,4)$ and $P(5,5)$ using more general results about finite graded posets we prove in the remaining sections. These decompositions turn out to be very hard to find, as they are completely ad hoc.

One of the general bootstrapping results we prove is Theorem~3.2, which says that if $P$ is a finite graded poset, there exists a canonical bijection between symmetric chain decompositions of $P \times \mathbf{m}$ and $P \times \mathbf{n}$ for $m, n\ge \rk(P) + 1$, that preserves the existence of taut chains.

Also, we show in Theorem 3.3 that if $P$ has a unique maximal/minimal element, then there is a canonical $(\rk(P)+1)$ to $1$ surjection from symmetric chain decompositions of $P \times \textbf{\rk(P)+1}$ to symmetric chain decompositions of $P \times \textbf{\rk(P)}$ which sends decompositions with taut chains to decompositions with taut chains. Under a mild additional hypothesis, we also show that $P \times \textbf{\rk(P)}$ has a decomposition without taut chains if and only if this is true for $P \times \textbf{(\rk(P)+1)}$. 

The structure of this paper is as follows.
\begin{itemize}
\item In Section 2, we recall some basic definitions about finite graded posets, and some definitions briefly mentioned in the introduction.
\item In Section 3, we state our main results.
\item In Section 4, we prove the main results pertaining to general finite graded posets.
\item In Section 5, we explicitly construct symmetric chain decompositions with no taut chains for $P(3,5)$, $P(4,5)$, and $P(5,5)$. By the results in Section 4, we complete the proof of Theorem 3.1, classifying which $P(k,n)$ have symmetric chain decompositions without taut chains.
\end{itemize}

\section{Definitions}

We first recall some basic definitions about finite graded posets.

\begin{defn}
In a poset $P$, we say that $y$ \textit{covers} $x$, denoted $x \prec y$, if $x < y$ and there is no element $z$ such that $x<z<y$. A finite graded poset $(P, \le)$ is a finite poset equipped with a \textit{rank function} $\rk:P \to \mathbb{N}$ such that the rank of every minimal element is $0$, and if $x \prec y$ then $\rk(y)=\rk(x)+1$. The rank of $P$, denoted by $\rk(P)$, is the maximal value of $\rk$ on $P$. Say $P$ is \textit{rank-symmetric} if the number of elements of rank $r$ is the same as the number of elements of rank $\rk(P)-r$. If $P$ has a unique maximal/minimal element, then we will denote them by $\min_P$ and $\max_P$.
\end{defn}

All posets in this paper are finite graded posets; $P$ will always refer to a finite graded poset.

\begin{defn}
A \textit{symmetric chain} in $P$ is a chain which for some $r$ consists of exactly one element of ranks $r,r+1,\ldots, \rk(P)-r$. A \textit{symmetric chain decomposition} of $P$ is a partition of $P$ into symmetric chains.
\end{defn}

The following notion of ``tautness'' (discussed in the previous section) is the key notion studied in this paper.

\begin{defn}
We define a symmetric chain in $P \times \mathbf{n}$ to be \textit{taut} if it contains a subchain of the form $$p \times \textbf{n}=(p,0)\prec \ldots \prec (p,n-1)$$ for some $p \in P$.
\end{defn}

Recall the poset defined in the introduction $$P(k,n)=Q_k \times \textbf{n},$$ where $Q_k$ is the $k$-dimensional hypercube. 

For $P(k,n)$, the rank function $\rk$ is simply the sum of the coordinates. In particular, $$\rk(P(k,n))=k+n-1.$$

\section{Main Results}

The central result of this paper is Theorem 3.1, proved in Section~5.

\begin{thm}
There exists a symmetric chain decomposition of $P(k,n)$ with no taut chain if and only if $k \ge 5$ and $n \ge 3$.
\end{thm}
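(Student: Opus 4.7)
The plan is to handle the two implications separately, using Theorems~3.2 and~3.3 to reduce the infinite parameter range to a manageable finite core.

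For the sufficiency direction ($k \ge 5$, $n \ge 3$), the first step is the explicit construction (carried out in Section~5) of taut-free SCDs of $P(5,3)$, $P(5,4)$ and $P(5,5)$; this is the genuinely ad hoc content. To extend from $n=5$ to $n=6$, I would invoke the ``mild hypothesis'' version of Theorem~3.3 applied to $P = Q_5$, which I would verify holds, yielding a taut-free SCD of $P(5,6)$. Theorem~3.2 then produces, via its taut-preserving bijection, taut-free SCDs of $P(5,n)$ for every $n \ge 6$, completing the case $k=5$. For $k \ge 6$, I would use the isomorphism $Q_k \times \mathbf{n} \cong Q_{k-5} \times (Q_5 \times \mathbf{n})$ and form the standard product SCD built from any SCD of $Q_{k-5}$ and a taut-free SCD $\mathcal{D}$ of $Q_5 \times \mathbf{n}$. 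A potential taut subchain $((q,p'),0) \prec \ldots \prec ((q,p'),n-1)$ of a chain in this product must lie inside a single rectangle $C \times D$ with $D \in \mathcal{D}$, which would force all of $(p',0), \ldots, (p',n-1)$ to appear on $D$, contradicting that $D$ is not taut.

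For the necessity direction ($k \le 4$ or $n \le 2$), the boundary cases are easy. For $n=1$, every nonempty chain of $Q_k \times \mathbf{1}$ contains some $(p,0)$ and is vacuously taut. For $n=2$, in any SCD of $Q_k \times \mathbf{2}$ the unique chain through $\min$ and $\max$ has length $k+2$ and uses exactly one cover of the form $(p,0) \prec (p,1)$, which is itself a taut subchain. For $k=0$, $P(0,n) = \mathbf{n}$ has only one SCD, its own maximal chain, which is trivially taut.

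The main obstacle is the range $k \in \{1,2,3,4\}$ with $n \ge 3$. For each fixed $k$, Theorem~3.2 identifies the taut behaviour across all $n \ge k+1$, and Theorem~3.3 links $n = k$ with $n = k+1$, reducing the problem to the finite set $n \in \{3, \ldots, k+1\}$. For $k=1$ a short structural argument works: in $\mathbf{2} \times \mathbf{n}$ the long chain from $\min$ to $\max$ must ``switch'' from the $(0,\cdot)$-slab to the $(1,\cdot)$-slab at either the very bottom or the very top, for otherwise the complementary set fails to be a chain, and either choice forces the long chain to be taut. For $k \in \{2,3,4\}$ I expect either a more delicate rigidity argument focused on the chains through $\min$ and $\max$, or a finite but nontrivial case analysis of SCDs of the small cuboids $Q_k \times \mathbf{n}$ for $n \le k+1$, is needed; this is the hardest part of the proof.
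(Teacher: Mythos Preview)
Your sufficiency argument matches the paper's: the explicit constructions for $P(5,3)$, $P(5,4)$, $P(5,5)$, the step up to $n=6$ via Theorem~3.3, the extension to all $n\ge 6$ via Theorem~3.2, and the product argument for $k\ge 6$ (which is exactly the paper's Lemma~4.2/Corollary~4.3) are all as in the paper.

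The genuine gap is in the necessity direction for $k\in\{2,3,4\}$. You reduce to a finite range of $n$ via Theorems~3.2 and~3.3 and then defer to an unspecified ``finite but nontrivial case analysis'', calling this the hardest part of the proof. In fact the paper dispatches it uniformly for \emph{all} $n$ with a short counting lemma (Lemma~4.4), and no reduction or case check is needed. The idea, for $\rk(P)$ even: in a taut-free symmetric chain decomposition of $P\times\mathbf{n}$, consider the packet $\Lambda$ of elements $(p,n-1)$ with $\rk(p)=\rk(P)/2$. Each symmetric chain through an element of $\Lambda$ must reach down to rank $\rk(P)/2$, hitting some $[q_p,\rk(P)/2]$; non-tautness forces $\rk(q_p)<\rk(P)/2$. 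Since the $q_p$ are distinct, the size of the middle rank of $P$ is at most the total size of the strictly lower ranks. For $P=Q_4$ this fails ($6>1+4$); the odd-rank variant (or applying the even case to $Q_3\times\mathbf{2}$) handles $Q_3$ ($3>2\cdot 1$). The cases $k=1,2$ are trivial by inspection and also fall under the same lemma. So what you flagged as the hardest step is actually a two-line pigeonhole argument, and your detour through Theorems~3.2 and~3.3 for this direction is unnecessary.
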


Importantly, for fixed $Q_k$, making $n$ very large does not aid us in constructing decompositions with no taut chains. If $n$ is very large, one might hope that for each chain, the poset provides enough mobility such that we could make the chain turn in one of the many $Q_k$'s down the line before continuing on, thus breaking tautness. The phenomenon preventing us from doing that however is that once $n$ is large enough, the number of chains is equal to the number of elements in $Q_k$, so we ``clog up'' most of the $Q_k \times \{r\}$'s with chains, preventing us from turning.

Most of our considerations generalize under mild conditions to arbitrary posets $P$ in place of $Q_k$. In particular, we prove the following two theorems which we later apply to $Q_k$ in the proof of Theorem~3.1. These theorems would allow one to answer the analogous question for $P\times \mathbf{n}$ in a similar way, reducing the problem to a finite computation.

\begin{thm}
In a rank-symmetric poset $P$, if $m,n \ge \rk(P)+1$, then there is a canonical bijection between the set of symmetric chain decompositions of $P\times \mathbf{m}$ and of $P \times \mathbf{n}$ which bijects decompositions with taut chains.
\end{thm}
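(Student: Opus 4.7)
The plan is to use the structural rigidity of the ``middle region'' of $P\times\mathbf{n}$ to show that any symmetric chain decomposition (SCD) is determined by data intrinsic to $P$. Set $r = \rk(P)$. For $n \ge r + 1$, rank $s$ of $P \times \mathbf{n}$ has size exactly $|P|$ --- with one element $(p, s - \rk(p))$ for each $p \in P$ --- precisely when $s \in [r, n-1]$, and the middle rank of $P \times \mathbf{n}$ lies inside this interval. Hence any SCD consists of exactly $|P|$ chains, each of which passes through every rank $s \in [r, n-1]$. For each such $s$, this gives a bijection $\sigma_s$ from the set of chains to $P$, sending a chain $C$ to the $P$-coordinate of its unique rank-$s$ element.

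The key structural lemma I would prove is that $\sigma_s = \sigma_{s+1}$ for all $s \in [r, n-2]$. The cover relation in $P \times \mathbf{n}$ forces $\sigma_{s+1}(C) \ge \sigma_s(C)$ in $P$ for every chain $C$, so $f := \sigma_{s+1} \sigma_s^{-1}$ is a bijection $P \to P$ with $f(p) \ge p$ for all $p$. Iterating gives $p \le f(p) \le f^2(p) \le \cdots$; since $f^k = \mathrm{id}$ for some $k$ (bijection of a finite set), all of these inequalities must be equalities, hence $f = \mathrm{id}$ and $\sigma_s = \sigma_{s+1}$. Writing $\sigma$ for the common bijection and $C_p := \sigma^{-1}(p)$, this pins down the middle portion of $C_p$ to be exactly $\{(p, r - \rk(p) + i) : 0 \le i \le n - 1 - r\}$.

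What remains of each chain lives in the lower cone $L := \{(q, j) \in P \times \mathbf{n} : \rk(q) + j \le r\}$ and the upper cone $U^n := \{(q, j) : \rk(q) + j \ge n - 1\}$. The poset $L$ does not depend on $n$, while $U^n$ is canonically identified with $U^{r+1}$ by the order isomorphism $(q, j) \mapsto (q, j - (n - r - 1))$. An SCD of $P \times \mathbf{n}$ is therefore equivalent to the data of (i) a partition of $L$ into chains, one ending at each rank-$r$ element $(p, r - \rk(p))$, (ii) a partition of $U^{r+1}$ into chains, one starting at each rank-$r$ element, and (iii) the constraint that, for each $p \in P$, the chains of (i) and (ii) ending resp.\ starting at $(p, r - \rk(p))$ have equal length, which makes the reassembled chain symmetric about the middle rank of $P \times \mathbf{n}$. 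Since this data set is intrinsic to $P$, we obtain the asserted canonical bijection between SCDs of $P \times \mathbf{m}$ and of $P \times \mathbf{n}$ for all $m, n \ge r + 1$.

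For tautness: the chain $C_p$ is taut iff it contains $(p, j)$ for all $j \in [0, n - 1]$; since its middle portion automatically contributes the $(p, j)$ with $r - \rk(p) \le j \le n - 1 - \rk(p)$, this is equivalent to the conjunction of (a) the lower part of $C_p$ contains $\{(p, j) : 0 \le j \le r - \rk(p)\} \subset L$, and (b) after the identification $U^n \cong U^{r+1}$, the upper part of $C_p$ contains $\{(p, j) : r - \rk(p) \le j \le r\} \subset U^{r+1}$. Both conditions are intrinsic to the data that the bijection transfers, so tautness is preserved. The main difficulty is the structural lemma forcing the middle bijection to be constant; once it is in place the remainder is essentially bookkeeping.
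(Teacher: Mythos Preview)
Your proof is correct and follows essentially the same strategy as the paper: establish that chains must move vertically through the middle block (ranks $r$ through $n-1$), then observe that the remaining data in the first and last blocks is intrinsic to $P$ and transfers directly between $P\times\mathbf{m}$ and $P\times\mathbf{n}$. Your argument for the key rigidity step --- that the bijection $f=\sigma_{s+1}\sigma_s^{-1}$ satisfies $f(p)\ge p$ on a finite poset and hence has finite order forcing $f=\mathrm{id}$ --- is a cleaner and more self-contained version of the paper's packet-counting argument (which works left to right across packets of equal size), but the content is the same.
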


\begin{thm} Let $P$ be a rank-symmetric poset with a unique maximum and minimum element. Then there is a $(\rk(P) + 1)$ to $1$ surjection from the set of symmetric chain decompositions of $P\times \textbf{(\rk(P)+1)}$ to the set of symmetric chain decompositions of $P \times \textbf{rk(P)}$ which sends decompositions with taut chains to decompositions with taut chains. Furthermore, if $\max_P$ covers at least $2$ elements, then $P \times \textbf{\rk(P)}$ has a decomposition without taut chains if and only if this is true for $P \times \textbf{(\rk(P)+1)}$.
\end{thm}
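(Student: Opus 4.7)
The plan is to prove Theorem~3.3 by explicitly constructing a canonical $(r+1)$-to-$1$ map $\Psi\colon \mathrm{SCD}(P\times\mathbf{r+1})\to\mathrm{SCD}(P\times\mathbf{r})$, where $r=\rk(P)$, via a layer-collapse procedure, and deducing the remaining claims from its structure. I first establish the basic counts: rank-symmetry together with $|P_0|=|P_r|=1$ gives that the unique middle rank $r$ of $P\times\mathbf{r+1}$ has exactly $|P|$ elements while each of the two middle ranks of $P\times\mathbf{r}$ has $|P|-1$ elements, so SCDs of $P\times\mathbf{r+1}$ consist of $|P|$ odd-length chains all meeting rank $r$, while SCDs of $P\times\mathbf{r}$ consist of $|P|-1$ even-length chains. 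Thus $\Psi$ must delete exactly one chain and swap chain-length parities.

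The structural heart is an analysis of the ``extremal chains''. In any $\mathcal{D}\in\mathrm{SCD}(P\times\mathbf{r+1})$, the chain $C_{\min}$ containing $(\min_P,r)$ has $(\min_P,r)$ as its middle (rank-$r$) element. Since $\min_P$ covers nothing in $P$, every downward cover from $(\min_P,r)$ is an $n$-step into the $\min_P$-column, so $C_{\min}$ is either the singleton $\{(\min_P,r)\}$ or the full taut spine $(\min_P,0)\prec\cdots\prec(\min_P,2r)$; a dual observation holds at the chain $C_{\max}$ containing $(\max_P,0)$. This dichotomy is the hook that lets me canonically extract a layer height $k\in\{0,\ldots,r\}$ from $\mathcal{D}$. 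I then define $\Psi(\mathcal{D})$ by deleting all elements of $P\times\{k\}$, shifting higher layers down by one in the second coordinate, and reconnecting each chain that traversed layer $k$ by splicing through its unique $n$-step at that layer. Well-definedness is a parity check: each crossing chain loses exactly one element, making its length even and symmetric about rank $(2r-1)/2$.

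For the $(r+1)$-to-$1$ property I construct the inverse ``layer insertion'' operation: given $\mathcal{D}'\in\mathrm{SCD}(P\times\mathbf{r})$ and $k\in\{0,\ldots,r\}$, I insert a fresh copy of $P$ as the new layer $k$, shift higher layers up by one, extend each existing chain through the new layer by inserting a single $n$-step at the site dictated by the chain's middle, and add a canonical new singleton chain at rank $r$ to restore the chain count from $|P|-1$ to $|P|$. Distinct $k$ yield distinct preimages since the inserted layer is recoverable from the decomposition, and taut preservation follows immediately: a taut subchain $(p,0)\prec\cdots\prec(p,r)$ in $\mathcal{D}$ contains the collapsed $n$-step and restricts to the taut subchain $(p,0)\prec\cdots\prec(p,r-1)$ in $\Psi(\mathcal{D})$.

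For the equivalence under the hypothesis that $\max_P$ covers at least two elements, the direction ``non-taut in $P\times\mathbf{r+1}$ implies non-taut in $P\times\mathbf{r}$'' follows from the fact that a $p$-taut subchain in $\Psi(\mathcal{D})$ lifts uniquely to a $p$-taut subchain in $\mathcal{D}$ by re-inserting the collapsed $n$-step. The reverse direction requires that at least one of the $r+1$ lifts of a non-taut $\mathcal{D}'$ is itself non-taut; here the extra cover at $\max_P$ provides the needed flexibility to route the canonical insertion away from any spinal column that would otherwise force a full $\max_P$-spine in the lifted chain. The main obstacle I anticipate is pinning down the canonical rule for extracting $k$ from $\mathcal{D}$ and verifying that the reconnection and insertion operations are mutually inverse at the level of cover relations; a case analysis on the singleton-versus-spine nature of $C_{\min}$, $C_{\max}$ together with the shape of the longest chain should handle this.
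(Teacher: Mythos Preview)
Your proposal has two genuine errors at its core. First, the dichotomy you assert for $C_{\min}$ is false: in $P\times\mathbf{r+1}$ the $\mathbf{n}$-coordinate is at most $r$, so $(\min_P,2r)$ does not even exist, and upward from $(\min_P,r)$ the chain is forced to take a $P$-step to some $(q,r)$ with $q$ covering $\min_P$, after which it is completely unconstrained. Hence $C_{\min}$ is almost never a ``full spine'', and the hook you rely on to extract a canonical $k$ from $\mathcal D$ is missing. Second, and more fundamentally, the layer-collapse operation is not well defined on SCDs. A chain may take several consecutive $P$-steps at a fixed $\mathbf{n}$-coordinate $k$ and so have many elements in $P\times\{k\}$; for instance a maximal chain can run $(\min_P,0)\prec(p_1,0)\prec\cdots\prec(\max_P,0)\prec(\max_P,1)\prec\cdots$, with $r+1$ elements in layer $0$. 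Deleting all of $P\times\{k\}$ then leaves a rank-gap in that chain which no single cover relation can bridge after the shift, so you do not get a chain in $P\times\mathbf{r}$. There is no reason any particular $k$ avoids this for all chains simultaneously.

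The paper's argument instead collapses the middle \emph{rank} $r$ of $P\times\mathbf{r+1}$ (a horizontal row $M$ in the pictorial representation, not a diagonal layer $P\times\{k\}$). Because $P$ has unique extremes, $M$ has exactly one more element than each neighbouring row $M^{-},M^{+}$; this forces a unique singleton chain in $M$ and forces every other chain to meet $M$ in exactly one element, whose neighbours in $M^{-}$ and $M^{+}$ differ in $P$-rank by at most $1$, so deleting $M$ and shifting the last block down by one gives a valid SCD of $P\times\mathbf{r}$. The factor $r+1$ does not come from a choice of layer: given $\mathcal S$ downstairs, one forms the directed graph $G$ on $P$ recording how chains cross the two middle rows, shows $G$ is one maximal chain $\min_P\to\cdots\to\max_P$ plus loops, and identifies the preimages of $\mathcal S$ with the $r+1$ matchings of edges of $G$ to their endpoints (equivalently, the $r+1$ choices of which vertex on the long path becomes the singleton). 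Finally, you have the two directions of the taut equivalence reversed: ``non-taut downstairs $\Rightarrow$ non-taut upstairs'' is immediate from the surjection sending taut to taut (any preimage of a non-taut decomposition is non-taut) and needs no hypothesis, whereas it is ``non-taut upstairs $\Rightarrow$ non-taut downstairs'' that requires $\max_P$ to cover at least two elements, used to reroute the maximal chain before collapsing.
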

\begin{rmk}
The hypothesis of $\max_P$ covering two elements in Theorem 3.3 is needed for example when $P=\mathbf{3}$.
\end{rmk}

\section{Proofs of general results}
In this section we prove Theorems 3.2 and 3.3, as well as some auxiliary results used in proving Theorem 3.1.
\begin{defn}
In the poset $P \times \mathbf{n}$, we define a \textit{packet} to be the collection of elements of a given rank and $\mathbf{n}$-coordinate. The \textit{rank} of a packet $\Lambda$, denoted $\rk(\Lambda)$, is the common rank of elements in $\Lambda$. It will be very convenient for us to reparametrize $P \times \mathbf{n}$, and define $[p,r]:=(p,r-\rk(p)) \in P \times \mathbf{n}$ whenever $\rk(p) \le r < \rk(p)+n$. This makes $[p,r]$ is the unique element of $P \times \mathbf{n}$ with $P$-coordinate $p$ and rank $r$.
\end{defn}
Consider the map $P \times \textbf{n} \to \mathbb{Z}^2$ given by $(p,r) \mapsto (\rk(p),\rk(p)+r)$. Note that $\rk((p,r))=\rk(p)+r$ in $P \times \textbf{n}$, so $[p,r]$ gets sent to $(\rk(p),r)$. Under this map, the elements of $P \times \textbf{n}$ are identified into their packets. If we label each point in the image of this map with the number of points in the corresponding packet, we call this the \textit{pictorial representation} of $P \times \textbf{n}$. Figure 1 depicts the pictorial representation of $Q_4 \times \textbf{6}$ --- we use the pictorial representation for many of our proofs.

In the pictorial representation, the row $y=k$ contains all elements of rank $k$ in $P \times \textbf{n}$, the column $x=l$ contains all elements with $P$-coordinate of rank $l$, and the diagonal $y=x+r$ contains all elements of $\textbf{n}$-coordinate $r$. A chain which skips no ranks connects a sequence of packets, with each packet following the previous one either vertically up one packet (which is uniquely determined in $P \times \textbf{n}$ as increasing the $\mathbf{n}$-coordinate by 1), or diagonally up-right one packet (which corresponds to moving up in the $P$-coordinate).

\begin{figure}[H]
\centering
\begin{tabular}{c c c c c}
&&&&1\\
&&&4&1\\
&&6&4&1\\
&4&6&4&1\\
1&4&6&4&1\\
1&4&6&4&1\\
1&4&6&4&\\
1&4&6&&\\
1&4&&&\\
1&&&&
\end{tabular}
\caption{Pictorial Representation of $P \times \textbf{n}$ in the case of $Q_4 \times \mathbf{6}$}
\end{figure}

We observe that for $n \ge \rk(P)$, the pictorial representation of $P \times \mathbf{n}$ has rows with packets at coordinates $(x,y)$ as follows.
\begin{itemize}
\item For $0 \le y < \rk(P)$, there are $y+1$ packets with $0 \le x \le y$.
\item For $\rk(P) \le y \le n-1$, there are $\rk(P)+1$ packets with $0 \le x \le \rk(P)$.
\item For $n-1 < y \le \rk(P)+n-1$, there are $\rk(P)+n-y$ packets with $y-(n-1) \le x \le \rk(P)$.
\end{itemize}
We refer to these collections of rows, or equivalently these collections of ranks in the poset $P \times \mathbf{n}$, as the \textit{first block}, \textit{middle block}, and \textit{last block}, respectively. We make the important observation that for $m,n \ge \rk(P)$, the subposets corresponding to the first blocks of $P \times \mathbf{m}$ and $P \times \mathbf{n}$ are canonically isomorphic, as are the subposets corresponding to the last blocks.
\begin{lem}
If $P\times \mathbf{n}$ has a decomposition with no taut chain, and $Q$ is a poset with a symmetric chain decomposition, then $(P\times Q) \times \mathbf{n}$ has a decomposition with no taut chains.
\end{lem}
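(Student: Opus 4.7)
The strategy is the standard product-of-symmetric-chain-decompositions trick. I fix a symmetric chain decomposition $\mathcal{D}$ of $P \times \mathbf{n}$ with no taut chain, and a symmetric chain decomposition $\mathcal{E}$ of $Q$. For each pair $(C,D) \in \mathcal{D} \times \mathcal{E}$, the subposet $C \times D$ of $(P\times \mathbf{n}) \times Q$ is isomorphic, as a poset, to a rectangle $\mathbf{a}\times \mathbf{b}$, which admits a symmetric chain decomposition. I collect all the resulting chains, over all pairs $(C,D)$, to obtain a partition of $(P\times \mathbf{n}) \times Q \cong (P\times Q) \times \mathbf{n}$ into chains. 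The classical observation that rank is additive under products, combined with the symmetry of each $C$ in $P\times \mathbf{n}$ and each $D$ in $Q$, ensures that these chains are symmetric in $(P\times Q)\times \mathbf{n}$, so the above construction does yield a symmetric chain decomposition.

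What remains is to verify that no chain in this decomposition is taut. Suppose some chain $K$, contained in some $C\times D$, had a subchain of the form $\bigl((p,q),0\bigr)\prec \ldots \prec \bigl((p,q),n-1\bigr)$. Each cover relation along $K$ is either a cover up in $C$ or a cover up in $D$. Along this length-$n$ subchain the $Q$-coordinate is fixed at $q$, so no step can be a cover in $D$; hence all $n-1$ steps are covers up in $C$, each of which also keeps the $P$-coordinate equal to $p$. But a cover in $C \subset P\times \mathbf{n}$ that preserves the $P$-coordinate must increment the $\mathbf{n}$-coordinate by one. Therefore $C$ itself contains the subchain $(p,0)\prec (p,1)\prec \ldots \prec (p,n-1)$, contradicting the non-tautness of $C$ in $\mathcal{D}$.

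The argument is quite short; the only real content is the observation that inside a single rectangle $C\times D$, the only way to produce a taut subchain in the product is for the $\mathbf{n}$-coordinate to carry all the motion, which immediately transplants the taut subchain back into $C$. I therefore do not anticipate a substantive obstacle beyond carefully setting up the canonical identification $(P\times \mathbf{n})\times Q\cong (P\times Q)\times \mathbf{n}$ and recording that symmetric chains in the rectangle $C\times D$ inherit their symmetry in the ambient poset.
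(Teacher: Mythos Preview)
Your proof is correct and follows the same approach as the paper: form the products $C\times D$ of chains from the two decompositions, decompose each resulting rectangle into symmetric chains, and observe that a taut subchain in the product would project to a taut subchain in $C$. The paper's version asserts non-tautness without justification, whereas you have spelled out the (straightforward) verification.
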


\begin{proof}
We take the product of each non-taut chain in $P \times \mathbf{n}$ with each chain in the symmetric chain decomposition of $Q$ and decompose each resulting rectangle into symmetric chains arbitrarily. Then the resulting chains are symmetric, non-taut, and give a symmetric chain decomposition of $(P \times Q) \times \mathbf{n}$ as desired.
\end{proof}
\begin{cor} If $P(k,n)$ has a symmetric chain decomposition with no taut chain, then so does $P(k',n)$ for any $k' \ge k$.
\end{cor}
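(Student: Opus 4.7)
The plan is to apply Lemma 4.2 directly. The key observation is that the hypercube factors as $Q_{k'} \cong Q_k \times Q_{k'-k}$ as graded posets, so
\[
P(k',n) = Q_{k'} \times \mathbf{n} \;\cong\; Q_k \times Q_{k'-k} \times \mathbf{n} \;\cong\; (Q_k \times \mathbf{n}) \times Q_{k'-k},
\]
where the last isomorphism just reorders the Cartesian factors.

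First I would set $P = Q_k$ and $Q = Q_{k'-k}$ in the statement of Lemma 4.2. By the hypothesis of the corollary, $P \times \mathbf{n} = P(k,n)$ admits a symmetric chain decomposition with no taut chain. The poset $Q = Q_{k'-k}$ is a hypercube of dimension $k' - k \ge 0$, which admits a symmetric chain decomposition (this is the classical result of de Bruijn--Tengbergen--Kruyswijk, or equivalently the output of Kleitman's duplication method mentioned in the introduction).

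Lemma 4.2 then gives us a symmetric chain decomposition of $(Q_k \times \mathbf{n}) \times Q_{k'-k}$ with no taut chains. It remains only to identify this poset with $P(k',n) = Q_{k'} \times \mathbf{n}$. The isomorphism is given by reordering coordinates, and crucially it fixes the $\mathbf{n}$-coordinate. Since the definition of tautness depends only on the $\mathbf{n}$-coordinate of the chain (a chain is taut iff it contains a full vertical segment with fixed $(Q_{k'})$-coordinate), non-taut chains on one side correspond bijectively to non-taut chains on the other, and the decomposition transports to a symmetric chain decomposition of $P(k',n)$ with no taut chain.

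There is no real obstacle here; the only point worth checking is that the reordering isomorphism $Q_k \times Q_{k'-k} \times \mathbf{n} \to Q_{k'} \times \mathbf{n}$ preserves rank and leaves the $\mathbf{n}$-coordinate untouched, so symmetric chains go to symmetric chains and the taut/non-taut status is preserved.
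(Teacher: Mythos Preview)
Your proposal is correct and is exactly the intended argument: the paper states Corollary~4.3 immediately after Lemma~4.2 without proof, and the implicit reasoning is precisely the application of Lemma~4.2 with $P=Q_k$ and $Q=Q_{k'-k}$ (together with the standard fact that hypercubes admit symmetric chain decompositions). Your additional remarks about the reordering isomorphism preserving the $\mathbf{n}$-coordinate and hence tautness are accurate and make explicit what the paper leaves tacit.
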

\begin{lem}
If $P \times \mathbf{n}$ has a symmetric chain decomposition, then $P$ must be rank-symmetric.

If furthermore $P \times \mathbf{n}$ has a symmetric chain decomposition into non-taut chains, then \begin{itemize}
\item if $\rk(P)$ is even, the size of the middle rank of $P$ does not exceed the sum of all the sizes of lower ranks, and
\item if $\rk(P)$ is odd, the common size of the middle ranks of $P$ does not exceed twice the sum of the sizes of all ranks strictly before the middle ranks.
\end{itemize}
\end{lem}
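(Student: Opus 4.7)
For the first claim, I would deduce the rank-symmetry of $P$ from the standard fact that any finite graded poset admitting a symmetric chain decomposition is rank-symmetric. Applying this to $P \times \mathbf{n}$, its rank-generating polynomial factors as $F_P(x) \cdot (1 + x + \cdots + x^{n-1})$; since both this product and the factor $(1 + x + \cdots + x^{n-1})$ are palindromic, the quotient $F_P(x)$ is palindromic as well, giving the rank-symmetry of $P$.

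For the second claim, I plan to work in the pictorial representation, focusing on the middle column $x = t$ in the even case $\rk(P) = 2t$; the odd case $\rk(P) = 2t+1$ is handled analogously using both middle columns $x = t$ and $x = t+1$. By non-tautness, each chain visits column $t$ in at most one contiguous interval of rows containing at most $n-1$ elements. The key structural observation is that the only predecessors in $P \times \mathbf{n}$ of an element $(p, 0)$ with $p \in P_t$ are the elements $(p'', 0)$ for $p'' \prec p$, which sit at pictorial coordinates $(t-1, t-1)$; iterating this backwards traces out a pure diagonal staircase. Consequently, every chain passing through the bottom packet $(t, t)$ of column $t$ starts at a point of the lower diagonal $\{(r, r) : 0 \le r \le t\} = \{(p^*, 0) : \rk(p^*) \le t\}$. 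A symmetric argument places the ending element of every chain through the top packet $(t, t + n - 1)$ on the upper diagonal $\{(r, r + n - 1) : t \le r \le \rk(P)\}$.

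The non-tautness hypothesis further guarantees that the $|P_t|$ chains through $(t, t)$ are disjoint from the $|P_t|$ chains through $(t, t + n - 1)$: any chain through both would be forced to occupy all $n$ rows of column $t$, and hence be taut. Injecting the starts of the first set into the lower diagonal and the ends of the second set into the upper diagonal yields a naive combined bound of $|P_t| \le 2 \sum_{i < t} |P_i|$, which already establishes the odd-rank case. To sharpen this to $|P_t| \le \sum_{i < t}|P_i|$ in the even-rank case---the main obstacle---I plan to use the additional observation that a chain starting at $(t, t)$ or ending at $(t, t + n - 1)$ has, by chain symmetry, length exactly $n$ and starting rank $t$, and there are only $|P_t|$ such rank-$t$-starting chains in total. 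Moreover, non-tautness forbids the ``purely vertical'' rank-$t$-starting chain $(p, 0) \prec (p, 1) \prec \cdots \prec (p, n-1)$ for any $p \in P_t$, which combined with the diagonal injections should eliminate the factor-of-two slack and yield the sharp bound.
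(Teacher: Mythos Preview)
Your rank-symmetry argument via palindromic polynomials is fine and equivalent to the paper's inductive one-rank-at-a-time count.

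For the even case, you have isolated the right object --- the disjointness of the chains through the bottom packet $(t,t)$ from those through the top packet $(t,t+n-1)$ --- but you then take a detour that does not close. The two injections you describe land in diagonals of size $\sum_{i\le t}|P_i|$, not $\sum_{i<t}|P_i|$, so neither separately nor together do they produce the asserted ``naive combined bound'' $|P_t|\le 2\sum_{i<t}|P_i|$; and your sharpening step is left at the level of ``should eliminate the factor-of-two slack''. The clean way (and the paper's way) is to use your disjointness observation \emph{at rank $t$} rather than at the diagonals: every chain through $(t,t+n-1)$ reaches down to rank $t$ by symmetry, and at rank $t$ it cannot sit in packet $(t,t)$ --- precisely because that would put it in the first set, contradicting disjointness (equivalently, forcing tautness). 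Hence the $|P_t|$ second-set chains hit rank $t$ at pairwise distinct elements $[q,t]$ with $\rk(q)<t$, which immediately gives $|P_t|\le\sum_{i<t}|P_i|$ with no factor of two to remove. Note also that your auxiliary claim ``there are only $|P_t|$ rank-$t$-starting chains'' needs $n>t$; the argument above avoids this hypothesis.

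The odd case is not ``handled analogously'': if you try to run the same two-packet argument with columns $t$ and $t+1$ when $\rk(P)=2t+1$, a chain can pass through both the bottom of column $t$ and the top of column $t+1$ while making a single diagonal step somewhere in between, and such a chain need not be taut, so the two families are no longer disjoint. The paper sidesteps this entirely: by Lemma~4.2 with $Q=\mathbf{2}$, a non-taut decomposition of $P\times\mathbf{n}$ yields one of $(P\times\mathbf{2})\times\mathbf{n}$, and applying the even case to the rank-$(2t+2)$ poset $P\times\mathbf{2}$ gives exactly $|P_t|\le 2\sum_{i<t}|P_i|$.
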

\begin{proof}
If $P \times \mathbf{n}$ has a symmetric chain decomposition, then by the rank-symmetry of $P \times \textbf{n}$, we find that $P$ is rank-symmetric (by arguing inductively from the smallest rank up).

Suppose $\rk(P)$ is even and we have a decomposition of $P \times \mathbf{n}$ into non-taut chains. Note that $\rk(P \times \mathbf{n})=\rk(P)+\rk(\mathbf{n})=\rk(P)+n-1$. Let $\Lambda$ be the packet of elements $(p,n-1)\in P \times \textbf{n}$ with $\rk(p)=\rk(P)/2$. As $\rk(\Lambda)+\rk(P)/2=\rk(P\times \mathbf{n})$, a symmetric chain which contains an element $(p, n-1)\in\Lambda$ must also contain an element of the form $[q_p,\rk(P)/2]$. As the chains are disjoint, the elements $[q_p,\rk(P)/2]$ are distinct, so the $q_p$ are distinct.

We have $\rk(q_p)< \rk(P)/2$, as if $\rk(q_p)=\rk(P)/2$, then the $\textbf{n}$-coordinate of this element is $0$ and $p=q_p$, so the symmetric chain is taut. Hence, as the $q_p$'s are distinct, if the number of elements $p$ of middle rank in $P$ exceeds the number of elements of lower rank, then there are not enough elements $q_p$ to accommodate the chains passing through elements of $\Lambda$.

Finally, if $\rk(P)$ is odd, we apply the above argument to the even-ranked poset $P \times \mathbf{2}$ (using Theorem 2.2 with $Q=\mathbf{2}$).
\end{proof}

\begin{cor}
If $k \le 4$ or $n \le 2$, then every symmetric chain decomposition of $P(k,n)$ contains a taut chain.
\end{cor}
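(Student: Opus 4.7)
The plan is to split the conclusion into three subcases: $n = 1$, $n = 2$, and ($n \ge 3$ with $k \le 4$); together these exhaust $\{k \le 4\} \cup \{n \le 2\}$. The $n = 1$ and $n = 2$ cases I would handle by direct inspection of the chain containing the overall minimum element, while the main case, $n \ge 3$ with $k \le 4$, I would deduce from the contrapositive of Lemma 4.3.

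For $n = 1$, the tautness condition degenerates, since $p \times \mathbf{1}$ is a single element: every non-empty chain in $P(k, 1) \cong Q_k$ is trivially taut. For $n = 2$, I would track the chain $C$ containing the unique rank-$0$ element of $P(k, 2)$, namely its overall minimum. Being symmetric and hitting rank $0$, the chain $C$ must have parameter $r = 0$, so it covers every rank from $0$ up to $\rk(P(k, 2)) = k+1$. Since symmetric chains are saturated, consecutive elements of $C$ differ by a cover relation; the transition from $\mathbf{2}$-coordinate $0$ to $\mathbf{2}$-coordinate $1$ along $C$ must therefore occur via a cover of the form $(p, 0) \prec (p, 1)$, which directly witnesses tautness of $C$.

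For $k \le 4$ with $n \ge 3$, I would apply the contrapositive of Lemma 4.3 with $P = Q_k$, whose rank sizes are the binomial coefficients $\binom{k}{i}$. A direct check shows the required rank-size inequality is violated in this range: for $k$ even, $\binom{k}{k/2} > \sum_{i < k/2}\binom{k}{i}$ (namely $2 > 1$ at $k = 2$ and $6 > 5$ at $k = 4$); for $k$ odd, $\binom{k}{(k-1)/2} > 2\sum_{i < (k-1)/2}\binom{k}{i}$ (namely $1 > 0$ at $k = 1$ and $3 > 2$ at $k = 3$). The degenerate case $k = 0$ is also immediate, as $P(0, n) = \mathbf{n}$ has a unique symmetric chain decomposition consisting of the entire chain, which is taut by definition. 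No step here looks particularly delicate; the key qualitative point is that Lemma 4.3 on its own does not dispose of the cases $n \le 2$ when $k \ge 5$, so the separate argument tracking the chain through the minimum is genuinely needed.
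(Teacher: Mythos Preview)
Your argument is correct and follows essentially the same route as the paper: the cases $n\le 2$ are handled by observing that the maximal chain (the one through the global minimum) is forced to be taut, and the cases $k\le 4$ are handled by the rank-size inequality lemma---you even apply that lemma uniformly to $k=1,2$ where the paper instead appeals to direct inspection. Note only that the lemma you invoke is numbered~4.4 in the paper, not~4.3.
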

\begin{proof}
For $n=1$ the result is trivial, and for $n=2$ the maximal chain is always taut. For $k=1,2$, the result is trivial by inspection. For $k=3,4$, Lemma~4.4 applies.
\end{proof}
Note that for $P=Q_k$ with $k\ge 5$, Lemma~4.4 does not apply. Now we are ready to prove Theorems 3.2 and 3.3.

\begin{proof}[Proof of Theorem 3.2]
The key observation is that when $m,n \ge \rk(P)+1$, the ``clogging up'' behavior discussed after Theorem 3.1 forces the chains in the middle block to move in a certain way that allows us to add or subtract rows in the middle block without changing the nature of the symmetric chain decomposition.

To be more precise, as we mentioned when we introduced the pictorial representation, if $n \ge \rk(P)+1$, we have $n-\rk(P)$ consecutive rows in the middle block at $y=\rk(P),\rk(P)+1,\ldots, n-1$, each consisting of $\rk(P)+1$ packets with coordinates $(x,y)$ with $x=0,1,\ldots, \rk(P)$. Furthermore, in these consecutive rows, for a fixed $x$, the number of elements in the packets with coordinates $(x, \rk(P)),\ldots, (x, n-1)$ are the same. Each of these rows corresponds to a rank in $P \times \textbf{n}$, and the above discussion shows that these ranks have the same number of elements, so a symmetric chain decomposition when restricted to any pair of adjacent rows must biject the elements between them.

As the chains can only move vertically up and diagonally up-right, and any two of these rows have identical packet sizes, this bijection is clearly only possible by having all of the chains move vertically up across this block of rows.

Now if $m \ge \rk(P)+1$ as well, we can modify a symmetric chain decomposition for $P \times \textbf{n}$ to create one for $P \times \textbf{m}$ as follows. Write each chain $C$ in the decomposition of $P \times \textbf{n}$ as the disjoint union of chains $C_1 \cup C_2 \cup C_3$, where $C_2$ is the subchain of elements in the middle block, of the form $[p,\rk(P)] \prec [p,\rk(P)+1] \prec \ldots \prec [p,n-1]$, $C_1$ is the subchain of elements in the first block, and $C_3$ is the subchain of elements in the last block. We modify $C$ to become a chain in $P\times \textbf{m}$ by replacing $C_2$ with $[p,\rk(P)] \prec [p,\rk(P)+1] \prec \ldots \prec [p,m-1]$, and shifting $C_3$ by adding $m-n$ to the last coordinate of each element in $C_3$.

Finally, it is easy to see that this process preserves tautness of chains between $P \times \textbf{n}$ and $P \times \textbf{m}$.

\end{proof}

\begin{proof}[Proof of Theorem 3.3]
We first note that $P \times \textbf{(rk(P)+1)}$ has exactly one row with $\rk(P)+1$ packets in the middle block, and $P \times \textbf{\rk(P)}$ has an empty middle block. These posets have identical first blocks and identical last blocks, so the only difference between them is that the first and last blocks join up directly in the case of $P \times \textbf{\rk(P)}$, whereas in $P \times \textbf{(\rk(P)+1)}$, there's a single row with $\rk(P)+1$ packets separating them. In this proof, we carry out a very similar procedure to the proof of Theorem 3.2, and carefully analyze what happens around the middle block.

In the pictorial representation of $P \times \textbf{(rk(P)+1)}$, call $M$ the row in the middle block with packets at $(0, \rk(P)),\ldots (\rk(P),\rk(P))$, $M^{-}$ the row right below $M$ with packets at $(0, \rk(P)-1), \ldots (\rk(P)-1,\rk(P)-1)$, and $M^{+}$ the row right above $M$ with packets at $(1, \rk(P)+1), \ldots, (\rk(P),$\\$ \rk(P)+1)$. From the locations of the packets, the number of elements in the packets in $M$ is $1$ more than that in $M^{-}$ and that in $M^{+}$, as $P$ has a unique maximum and minimum element. Hence, there is a unique chain of length $1$ in $M$ in some packet $\Lambda$, and the symmetric chain decomposition bijects the remaining elements in $M$ with those in $M^{-}$ and those in $M^{+}$. By working from left to right in $M$, we get that the numbers of chains connecting pairs of packets from $M^{-}$ to $M$ are all completely determined by $\Lambda$: all packets in $M$ which are strictly to the right of $\Lambda$ receive precisely one chain diagonally from $M^{-}$, and all other chains between $M^{-}$ and $M$ are vertical. Similarly, we get all packets in $M$ which are strictly to the left of $\Lambda$ send one chain diagonally to $M^{+}$, and all other chains between $M$ and $M^{+}$ are vertical.

Hence, every element in $M^{-}$ is connected to an element in $M^{+}$ whose $P$-coordinate has rank at most $1$ higher. We can thus modify a symmetric chain decomposition of $P \times \textbf{(\rk(P)+1)}$ to one of $P \times \textbf{\rk(P)}$ as follows. Ignore the chain of length $1$, and for every other chain, decompose it as $D_1 \cup D_2 \cup D_3$, where $D_2$ is the element in the middle block, $D_1$ is the subchain of elements in the first block, and $D_3$ is the subchain of elements in the last block. To construct the chain in $P \times \textbf{\rk(P)}$, we remove $D_2$, and decrease the second coordinate of all elements of $D_3$ by $1$.

It is easy to check if a chain was taut, then it remains taut, and all newly constructed chains are still symmetric chains in $P \times \textbf{\rk(P)}$.

We now verify that the map above from the set of symmetric chain decompositions of $P \times (\textbf{\rk(P)+1})$ to the set of symmetric chain decompositions of $P \times (\textbf{\rk(P)})$ is a $(\rk(P)+1)$ to $1$ surjection. Suppose we have a symmetric chain decomposition $\mathcal{S}$ of $P \times \textbf{\rk(P)}$, viewed as a directed graph via $\prec$. Denote by $N^{-}$ and $N^{+}$ the two middle rows in the pictorial representation of $P \times \textbf{\rk(P)}$ (the last row of the first block, and the first row of the last block). Consider the directed graph $G$ (with loops) on $P$ defined by taking the restriction of $\mathcal{S}$ to $N^{-} \cup N^{+}$, and projecting this induced directed subgraph onto the $P$-coordinate. As $\mathcal{S}$ induces a bijection between $N^{-}$ and $N^{+}$, all vertices in $G$ except $\min_P$ and $\max_P$ have in-degree and out-degree $1$. Also, $\min_P$ has out-degree $1$ and in-degree $0$, while $\max_P$ has in-degree $1$ and out-degree $0$. Every directed edge in $G$ is either a loop, or increases rank by $1$ in $P$. From this, we deduce that $G$ consists of one directed maximal chain (from $\min_P$ to $\max_P$) and loops on the remaining vertices.

We show now that there exists a canonical equivalence between symmetric chain decompositions $\mathcal{S}'$ of $P \times \textbf{(\rk(P)+1)}$ that are mapped to $\mathcal{S}$, and matchings $f$ between the edges of $G$ and their endpoints.

Set $m^{-},m,m^{+},n^{-},$ and $n^{+}$ to be the ranks of $M^{-},M,M^{+},N^{-},$ and $N^{+}$ respectively ($n^{+}-1=n^{-}=m^{-}=m-1=m^{+}-2$). 

First, supposing that we have such a matching $f$, we construct $\mathcal{S}'$ as follows. Identify the restriction of $\mathcal{S}'$ in the first block of $P \times \textbf{\rk(P)+1}$ with the restriction of $\mathcal{S}$ in the first block of $P \times \textbf{\rk(P)}$, and similarly for the last blocks. All that remains now is to identify the $1$-element chain in $P \times \textbf{\rk(P)+1}$, and to correctly join up the ends of the chains in $M^{-}$ with the starts of the chains in $M^{+}$. Consider a directed edge $e$ from $p$ to $q$ in $G$, corresponding to $[p,n^{-}] \prec [q,n^{+}]$ from $N^{-}$ to $N^{+}$ in $P\times \textbf{\rk(P)}$. Then we create the chain  $[p,m^{-}] \prec [f(e),m] \prec [q,m^{+}]$ from $M^{-}$ to $M^{+}$ in $P \times \textbf{(\rk(P)+1)}$. Finally, there is a unique vertex $v$ in $G$ which no edge matches to, from which we create the $1$-element chain $[v,m]$ in $\mathcal{S}'$. All of these chains do not intersect, as $f$ is an injection, and $f$ misses $v$. Also, all chains in $\mathcal{S}'$ are symmetric.

Conversely, supposing we have a symmetric chain decomposition $\mathcal{S}'$ of $P \times \textbf{(\rk(P)+1)}$ which maps to $\mathcal{S}$, we construct a matching between the edges of $G$ and their endpoints as follows. Given a directed edge $e$ from $p$ to $q$ in $G$ corresponding to $[p,n^{-}]\prec [q,n^{+}]$ in $P \times \textbf{\rk(P)}$, consider the chain in $\mathcal{S}'$ which connects $[p,m^{-}]$ to $[q,m^{+}]$ in $P \times (\textbf{\rk(P)+1})$. We define $f(e)$ so that $[f(e),m]$ is the intermediate point on this chain. Clearly this is a matching, as the chains between $M^{-}$ and $M^{+}$ are disjoint, and $\rk(q)$ is at most $1$ higher than $\rk(p)$ so $f(e)=p$ or $q$.

These two maps are inverses of each other, proving the equivalence. As there are $\rk(P)+1$ matchings on $G$ (coming from the $\rk(P)+1$ possible matchings on the edges of the long chain in $G$), the conclusion follows.

Finally, suppose we have a symmetric chain decomposition of $P \times \textbf{(\rk(P)+1)}$ with no taut chain, and $\max_P$ covers at least $2$ elements. A taut chain in $P \times \textbf{rk(P)}$ is created in exactly the following cases. Either the maximal chain in $P \times \textbf{(\rk(P)+1)}$ has a subchain of the form $(\min_P,0) \prec (\min_P,1) \prec \ldots \prec (\min_P, \rk(P)-1)$, or a subchain of the form $(\max_P,1) \prec (\max_P,2) \prec \ldots \prec (\max_P,\rk(P))$. Disconnect $(\min_P,0)$ and $(\max_P,\rk(P))$ from the maximal chain. Connect $(\max_P,\rk(P))$ to an adjacent element with second coordinate also $\rk(P)$ which does not belong to the chain containing $(\min_P,\rk(P)-1)$ (this is possible as there are at least $2$ choices by the hypothesis on $P$), and add a connection from $(\min_P,0)$ to the chain which  $(\max_P,\rk(P))$ now belongs to. This new configuration of symmetric chains now avoids the two cases which would cause taut chains to appear in $P \times \textbf{\rk(P)}$, without creating any taut chains in $P \times \textbf{(\rk(P)+1)}$. This finishes the proof.

\end{proof}

\section{Proof of Theorem 3.1}
By Corollary 4.5, we only have left to construct symmetric chain decompositions of $P(k,n)$ for $k \ge 5$, and $n \ge 3$ with no taut chains. In the tables at the end of the paper, we give decompositions with no taut chains for $k=5$, $n=3,4,5$. Theorem 3.3 then yields such a decomposition for $k=5$, $n=6$, and Theorem 3.2 then yields such a decomposition for $k=5$ and all $n \ge 3$. Finally from this, Corollary 4.3 can then be used to get such decompositions for all $k \ge 5$ and $n \ge 3$.

In tables 1,2,3, the rows give the symmetric chains in $Q_5 \times \textbf{n}$, written in coordinates. Aiding in the finding of the decompositions below were the packet descriptions, and the natural $\mathbb{Z}/5\mathbb{Z}$ action on the points of $Q_5 \times \textbf{n}$.

\begin{table}
\centering
\begin{tabular}{c c c c c c c c c}
1& & &110000&111000&111100&111110& &\\
2& & &011000&011100&011110&011111& &\\
3& & &001100&001110&101110&101111& &\\
4& & &000110&100110&110110&110111& &\\
5& & &100010&110010&111010&111011& &\\
6&000000&100000&101000&101001&111001&111101&111111&111112\\
7& &010000&010100&010101&011101&011102&011112&\\
8& &001000&001010&001011&001111&001112&101112&\\
9& &000100&100100&100101&100111&100112&110112&\\
10& &000010&010010&010011&110011&110012&111012&\\
11& &000001&100001&110001&110002&110102&111102&\\
12& & &010001&011001&011002&011012& &\\
13& & &001001&001101&001102&101102& &\\
14& & &000101&000111&000112&010112& &\\
15& & &000011&100011&100012&101012& &\\
16& & &000002&100002&101002&111002& &\\
17& & & &010002&010102& & &\\
18& & & &001002&001012& & &\\
19& & & &000102&100102& & &\\
20& & & &000012&010012& & &\\
21& & & &110100&110101& & &\\
22& & & &011010&011011& & &\\
23& & & &101100&101101& & &\\
24& & & &010110&010111& & &\\
25& & & &101010&101011& & &\\
\end{tabular}
\caption{Symmetric chain decomposition of $P(5,3)$ with no taut chains}
\end{table}

\begin{table}
\centering
\begin{tabular}{c c c c c c c c c c}
1&000000&100000&101000&101100&101101&101102&111102&111103&111113\\
2& &010000&010100&010110&010111&010112&011112&011113&\\
3& &001000&001010&101010&101011&101012&101112&101113&\\
4& &000100&100100&110100&110101&110102&110112&110113&\\
5& &000010&010010&011010&011011&011012&111012&111013&\\
6& & &110000&110001&110002&111002&111003& &\\
7& & &011000&011001&011002&011102&011103& &\\
8& & &001100&001101&001102&001112&001113& &\\
9& & &000110&000111&000112&100112&100113& &\\
10& & &100010&100011&100012&110012&110013&\\
11& & & &111000&111100&111110& & &\\
12& & & &011100&011110&011111& & &\\
13& & & &001110&101110&101111& & &\\
14& & & &100110&110110&110111& & &\\
15& & & &110010&111010&111011& & &\\
16& &000001&100001&101001&111001&111101&111111&111112&\\
17& & &010001&010101&010102&010103&110103& &\\
18& & &001001&001011&001012&001013&011013& &\\
19& & &000101&100101&100102&100103&101103& &\\
20& & &000011&010011&010012&010013&010113& &\\
21& & &000002&100002&101002&101003&101013& &\\
22& & & &010002&010003&011003& & &\\
23& & & &001002&001003&001103& & &\\
24& & & &000102&000103&000113& & &\\
25& & & &000012&000013&100013& & &\\
26& & & &000003&100003&110003& & &\\
27& & & & &011101& & & &\\
28& & & & &001111& & & &\\
29& & & & &100111& & & &\\
30& & & & &110011& & & &\\
\end{tabular}
\caption{Symmetric chain decomposition of $P(5,4)$ with no taut chains}
\end{table}

\begin{table}
\centering
\begin{tabular}{c c c c c c c c c c c}
1& &000001&000002&000003&000004&000014&100014&110014&110114&\\
2& & &010001&010002&010003&010004&010014&010114& &\\
3& & &100001&100002&100003&100004&110004&111004& &\\
4& &010000&110000&110001&110002&110003&110103&110104&111104&\\
5& & &001001&001002&001003&001004&001104&101104& &\\
6& & & &011001&011002&011003&011004& & &\\
7& & & &101001&101002&101003&101004& & &\\
8& & &101000&111000&111001&111002&111003&111103& &\\
9& & &000101&000102&000103&000104&100104&100114& &\\
10& & & &010101&010102&010103&010104& & &\\
11& & &100100&100101&100102&100103&100113&101113& &\\
12& &000100&001100&001101&011101&011102&011103&011104&011114&\\
13& & & &101100&101101&101102&101103& & &\\
14& & & &110100&111100&111101&111102& & &\\
15& & &000011&000012&000013&001013&001014&101014& &\\
16& & & &100011&100012&100013&101013& & &\\
17& & & &110010&110011&110012&110013& & &\\
18&000000&001000&001010&001011&001012&001013&001113&001114&101114&111114\\
19& &000010&010010&010011&011011&011012&011013&011014&111014&\\
20& & &011000&011010&111010&111011&111012&111013& &\\
21& & & &000111&000112&000113&000114& & &\\
22& & &010100&010110&010111&010112&010113&011113& &\\
23& & &000110&100110&110110&110111&110112&110113& &\\
24& & & &011100&011110&011111&011112& & &\\
25& & & &001110&001111&101111&101112& & &\\
26& &100000&100010&101010&101110&111110&111111&111112&111113&\\
27& & & & &110101&110102& & & &\\
28& & & & &100111&100112& & & &\\
29& & & & &101011&101012& & & &\\
30& & & & &010012&010013& & & &\\
31& & & & &001102&001103& & & &
\end{tabular}
\caption{Symmetric chain decomposition of $P(5,5)$ with no taut chains}
\end{table}

From Tables 1,2,3, the proof of Theorem 3.1 is complete.

As Theorem~3.1 completely solves the question for $Q_k \times \textbf{n}$, one direction of further study would be to investigate other natural families of posets in a similar way using Theorems~3.2 and 3.3.

\end{document}